\documentclass[12]{article}
\usepackage{setspace}
\usepackage{tikz}
\usepackage[margin=1in]{geometry}
\usepackage{amssymb}
\usepackage{amsfonts}
\usepackage{amsmath}
\usepackage{fullpage}
\usepackage{graphicx}
\usepackage[utf8]{inputenc}
\usepackage{amsthm}
\usepackage{textcomp}
\usepackage{float}
\usepackage{bm}
\usepackage{xcolor}
\usepackage{enumerate}
\usepackage{enumitem}
\usepackage{hyperref}
\usepackage[normalem]{ulem}
\usepackage{comment}

\newtheorem*{theoremA}{Theorem A}

{\par\hfill$\Box$}

\newtheorem{theorem}{Theorem} 
\newtheorem{corollary}{Corollary} 
\newtheorem{lemma}{Lemma} 

\counterwithout{equation}{section}

\newcommand{\rE}{\mathrm{E}} 
\newcommand{\rV}{\mathrm{Var}} 
\newcommand{\rC}{\mathrm{Cov}} 

\begin{document}

\title{The Multinomial Allocation Model and the Random Box Load}

\author{
Serik Sagitov%
\thanks{Chalmers University of Technology and the University of Gothenburg, Sweden. Email: \texttt{serik@chalmers.se}}
}

\maketitle

\begin{abstract}
We revisit the random allocation model in which $n$ balls are independently placed into $N$ boxes with probabilities $q_1,\ldots,q_N$. A classical asymptotic result due to Kolchin, Sevastyanov, and Chistyakov for the expectations, variances, and covariances of the occupancy counts is reformulated in a compact and transparent  form in terms of the load of a randomly selected box. We further derive explicit two-sided bounds for the associated remainder terms, obtained under weaker assumptions than those previously required.
\end{abstract}

\textbf{Keywords:} Occupancy problem; multinomial allocation; two-sided bounds.

\section{Introduction}

The classical occupancy problem is a cornerstone of probability theory and combinatorics \cite{F1}, with deep connections to statistics, computer science, and population biology. In its simplest form, $n$ balls are independently placed into $N$ boxes, each selected with equal probability $1/N$. Fundamental quantities of interest include the number of empty boxes $\hat N_0$, the number $\hat N_r$ of boxes containing exactly $r$ balls, and the maximum occupancy; see \cite{GJM} for a recent treatment of the latter.

A natural generalization replaces the equiprobable allocation by  the \textit{multinomial allocation model} \cite{KSC, BHJ}. In this setting, each box $k$ is chosen with its own probability $q_k$, where
\[
q_1 \ge q_2 \ge \cdots \ge q_N > 0, \qquad q_1 + \cdots + q_N = 1.
\]
Thus, the vector $(q_1,\ldots,q_N)$ describes how the load is distributed across boxes, allowing for heterogeneous box weights.

In this paper, we revisit Theorem~5 in \cite[Ch.~III.1]{KSC}, which analyzes the multinomial allocation model in the so-called \textit{central region}. To make the result more transparent, we restate it as Theorem~A in terms of the empirical proportions
\[
\hat q_r := \frac{\hat N_r}{N}, \qquad r = 0,1,\ldots,n.
\]
Here, $\hat q_r$ is the proportion of boxes containing exactly $r$ balls; in particular, $\hat q_0$ is the proportion of empty boxes. These quantities form a probability distribution on $\{0,\ldots,n\}$, since
\[
\sum_{r=0}^{n} \hat q_r = 1,
\]
and their first moment satisfies
\[
\sum_{r=1}^{n} r \hat q_r = \alpha, \qquad \alpha := \frac{n}{N},
\]
where $\alpha$ is the average number of balls per box (the \textit{average load}).

Our reformulation is based on a simple randomization device. Let
\[
\xi = n q_X, \qquad X \sim \mathrm{Uniform}\{1,\ldots,N\},
\]
so that $X$ selects a box uniformly at random. The random variable $\xi$ can be interpreted as the load of a randomly chosen box. By construction,
\[
\rE[\xi] = \alpha.
\]
Moreover,
\[
0 < \alpha \le \beta, \qquad \beta := n q_1,
\]
where $\beta$ is the maximal possible value of $\xi$. The case $\alpha = \beta$ characterizes the equiprobable allocation model, in which $\xi \equiv \alpha$ is deterministic.

\begin{theoremA}[\cite{KSC}]
Assume that, as $n \to \infty$, there exist positive constants $C_1$ and $C_2$, independent of $n$, such that
\begin{equation}\label{alpha}
C_1 \le \alpha \le \beta \le C_2.
\end{equation}
For all non-negative integers $r$, define
\[
p_{r}(x) := \frac{x^r}{r!} e^{-x}.
\]
Then, for any fixed $r \ne t$,
\begin{align*}
\rE[\hat q_r] &= \rE[p_{r}(\xi)] + O(n^{-1}),\\
\rV[\hat q_r] &= n^{-1}\alpha \rE\!\left[p_{r}(\xi)\bigl(1 - p_{r}(\xi)\bigr)\right]
               - n^{-1}\!\left(\rE\!\left[p_{r}(\xi)(\xi - r)\right]\right)^2
               + O(n^{-2}),\\
\rC[\hat q_r,\hat q_t] &= -n^{-1}\alpha \rE[p_{r}(\xi)p_{t}(\xi)]
                         - n^{-1}\rE[p_{r}(\xi)(\xi - r)]\,
                           \rE[p_{t}(\xi)(\xi - t)]
                         + O(n^{-2}).
\end{align*}
\end{theoremA}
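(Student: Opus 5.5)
We would prove Theorem A by writing $\hat N_r=\sum_{k=1}^N \mathbf 1\{\nu_k=r\}$, where $\nu_k\sim\mathrm{Bin}(n,q_k)$ is the load of box $k$. The first two moments then reduce to exact binomial and trinomial probabilities. Concretely,
\[
\rE[\hat q_r]=\frac1N\sum_{k}\binom nr q_k^r(1-q_k)^{n-r},
\]
and, for $k\neq l$,
\[
P(\nu_k=r,\nu_l=t)=\frac{n!}{r!\,t!\,(n-r-t)!}\,q_k^rq_l^t(1-q_k-q_l)^{n-r-t}.
\]
This gives the exact formula
\[
\rC[\hat q_r,\hat q_t]=\frac1{N^2}\sum_{k}\bigl(\delta_{rt}P(\nu_k=r)-P(\nu_k=r)P(\nu_k=t)\bigr)
+\frac1{N^2}\sum_{k\ne l}\bigl(P(\nu_k=r,\nu_l=t)-P(\nu_k=r)P(\nu_l=t)\bigr).
\]
The plan is to expand each of these quantities in powers of $1/n$, uniformly in $k,l$, using $x_k:=nq_k\in(0,\beta]\subset(0,C_2]$.

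\textbf{Step 1 (Poisson approximation of one box).} For $x=nq\le C_2$ we would show
\[
\binom nr q^r(1-q)^{n-r}=p_r(x)\Bigl(1+\tfrac{1}{2n}\bigl(r-(x-r)^2\bigr)\Bigr)+O(n^{-2}),
\]
uniformly in $x\in[0,C_2]$ for fixed $r$. This follows from $\binom nr r!/n^r=1-\binom r2/n+O(n^{-2})$ and $(n-r)\log(1-x/n)=-x+(rx-x^2/2)/n+O(n^{-2})$. Averaging over $k$ gives $\rE[\hat q_r]=\rE[p_r(\xi)]+O(n^{-1})$ at once. The first-order correction is not needed for the mean, but it must be tracked for the covariance.

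\textbf{Step 2 (the pair term, which is the main obstacle).} The diagonal sum is $N^{-1}\bigl(\delta_{rt}\rE p_r(\xi)-\rE[p_r(\xi)p_t(\xi)]\bigr)+O(n^{-2})$, and since $N^{-1}=\alpha/n$ it produces the $n^{-1}\alpha(\cdot)$ terms. The off-diagonal sum is the delicate part. Each summand is $O(n^{-1})$, while there are about $N^2$ pairs, so we need its exact $1/n$ coefficient. We would compute the ratio
\[
\frac{P(\nu_k=r,\nu_l=t)}{P(\nu_k=r)P(\nu_l=t)}
=\frac{(n-r)!\,(n-t)!}{n!\,(n-r-t)!}\cdot\frac{(1-q_k-q_l)^{n-r-t}}{(1-q_k)^{n-r}(1-q_l)^{n-t}}.
\]
The first factor equals $1-rt/n+O(n^{-2})$. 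Taking logarithms of the second factor, the quadratic terms give $-(n)q_kq_l=-x_kx_l/n$ and the linear terms give $(tx_k+rx_l)/n$, all up to $O(n^{-2})$. Hence the ratio is
\[
1-\frac1n(x_k-r)(x_l-t)+O(n^{-2}).
\]
By Step 1, $P(\nu_k=r)=p_r(x_k)+O(n^{-1})$, so each off-diagonal summand equals $-n^{-1}p_r(x_k)(x_k-r)\,p_t(x_l)(x_l-t)+O(n^{-2})$. Summing over $k\ne l$, dividing by $N^2$, and restoring the $k=l$ terms at cost $O(N^{-1}n^{-1})=O(n^{-2})$, we get $-n^{-1}\rE[p_r(\xi)(\xi-r)]\,\rE[p_t(\xi)(\xi-t)]+O(n^{-2})$. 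The care needed here is that all error terms are uniform in $x_k,x_l\in[0,C_2]$, and that the $O(n^{-2})$ per-pair errors, multiplied by $N^2/N^2$, stay $O(n^{-2})$. Both hold because $\beta\le C_2$ bounds the logarithmic expansions uniformly, and $\alpha\ge C_1$ gives $N\asymp n$.

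\textbf{Step 3 (assembly).} Setting $t=r$ combines the diagonal contribution $\alpha n^{-1}\rE[p_r(\xi)(1-p_r(\xi))]$ with the pair contribution $-n^{-1}(\rE[p_r(\xi)(\xi-r)])^2$, which gives the variance formula. Taking $r\neq t$ gives the covariance formula in the same way. The remaining routine check is that the Step 1 correction terms cancel in the off-diagonal differences. They do, because we worked with the ratio, which only ever multiplies products of marginals.
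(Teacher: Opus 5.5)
Your proof is correct, and it reaches Theorem~A by a different and more economical route than the paper.

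The paper derives Theorem~A from its Theorem~1. It expands the trinomial term $\binom{n}{r,t,n-r-t}\rE[q_X^rq_Y^t(1-q_X-q_Y)^{n-r-t}]$ (through $Q_n(r,t)$ and $\delta_n^*$) and the product $\rE[\hat q_r]\rE[\hat q_t]$ separately, each up to order $n^{-2}$. This forces it to carry the first-order marginal correction $(2n)^{-1}\rE[p_r(\xi)(r-(\xi-r)^2)]$ and the remainders $\varphi_n^*$, $\delta_n^*$, $R_0$. Only then does it observe that the $O(1)$ terms cancel and that $I_1-I_3$ collapses to the stated coefficient.

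You instead divide by the product of marginals pair by pair, so the leading-order cancellation is exact and built in. Because the ratio minus one is already $O(n^{-1})$, zeroth-order marginals suffice. The factorized form $1-n^{-1}(x_k-r)(x_l-t)$ also explains at once why the $n^{-1}$ coefficient is the product $\rE[p_r(\xi)(\xi-r)]\,\rE[p_t(\xi)(\xi-t)]$.

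What the paper's heavier bookkeeping buys is non-asymptotic control. It gives explicit two-sided remainder bounds, polynomial in $\beta$, valid for every $n$ under the sole condition $q_1\le 1/4$, so no central-region assumption is needed. Your $O(\cdot)$ terms are uniform only because $\beta\le C_2$, and they come without constants.

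Two small points in your write-up. First, the first-order term of Step~1 is never actually used: Step~2 only needs $P(\nu_k=r)=p_r(x_k)+O(n^{-1})$, so it need not be ``tracked''. Second, the estimate $N^{-1}=\alpha/n=O(n^{-1})$ that controls the diagonal terms comes from $\alpha\le C_2$, not from $\alpha\ge C_1$. The lower bound is never used, which is consistent with the paper's point that it can be dropped.
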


Our main result, presented in the next section, extends the approximations in Theorem~A by allowing the parameters of the model $(N, q_1, \ldots, q_N)$ to depend on $n$, without imposing the restriction~\eqref{alpha}.

\section{Main result}\label{Mn}

\begin{theorem} \label{tm1}
For $0\le r,t\le n$ and $r\ne t$,  the following three approximation formulas hold
\begin{align}
\rE[\hat q_r]&=\rE[p_{r}(\xi)]+(2n)^{-1}\rE[p_{r}(\xi)(r-(\xi-r)^2)]+n^{-2}R_1(n,r), \label{R1}\\
\rV[\hat q_r]&=n^{-1}\alpha\rE[p_{r}(\xi)(1-p_{r}(\xi))]-n^{-1}(\rE[p_{r}(\xi)(\xi-r)])^2+n^{-2}(R_2(n,r,r)+\alpha R_1(n,r)), \label{R2}\\
\rC[\hat q_r,\hat q_t]&=-n^{-1}\alpha \rE[p_{r}(\xi)p_{t}(\xi)]-n^{-1}\rE[p_{r}(\xi)(\xi-r)]\,\rE[p_{t}(\xi)(\xi-t)]+n^{-2}R_2(n,r,t), \label{R3}
\end{align}
where provided  $q_1\le1/4$, 
\begin{align}
- (r!)^{-1} \beta^r(r^32^{r-1}+4) &\le R_1(n,r) \le (r!)^{-1}\beta^rr^2(2r^2+ 4\beta^2),  \label{D1}\\
- (r!t!)^{-1}\beta^{2u}L_2(u,\beta) &\le R_2(n,r,t) \le (r!t!)^{-1}\beta^{2u} K_2(u,\beta), \label{D3}
\end{align}
with $u=\max(r,t)$ and
\begin{align*}
L_2(r,\beta)&=12+2\beta+5(r^3+\beta)2^{r}+\beta^2(r^2+1),\\
K_2(r,\beta)&=8+12r^3 2^{r}+4\beta r+  \beta^2 2^{2r+6} r^2.
\end{align*}
\end{theorem}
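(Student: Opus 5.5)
Write $\hat N_r=\sum_{k=1}^N \mathbf 1\{\nu_k=r\}$, where $\nu_k\sim\mathrm{Bin}(n,q_k)$ is the load of box $k$. Then
\[
\rE[\hat q_r]=N^{-1}\sum_k b_r(n,q_k)=\rE[b_r(n,\xi/n)],\qquad b_r(n,q):=\binom nr q^r(1-q)^{n-r}.
\]
For the second moments, $(\nu_k,\nu_j)$ with $k\ne j$ is trinomial, so
\[
P(\nu_k=r,\nu_j=t)=\frac{n!}{r!\,t!\,(n-r-t)!}\,q_k^rq_j^t(1-q_k-q_j)^{n-r-t}.
\]
This gives
\[
\rV[\hat q_r]=N^{-2}\sum_k b_r(1-b_r)+N^{-2}\sum_{k\ne j}\bigl(P(\nu_k=\nu_j=r)-b_r(q_k)b_r(q_j)\bigr),
\]
and similarly for the covariance, where the diagonal term is $-N^{-2}\sum_k b_r(q_k)b_t(q_k)$. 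Since $N^{-1}=\alpha/n$, the diagonal parts give $n^{-1}\alpha\rE[b_r(1-b_r)]$ and $-n^{-1}\alpha\rE[b_rb_t]$. After replacing $b_r$ by $p_r(\xi)$ (error $O(n^{-1})$, which is why $\alpha R_1$ appears in \eqref{R2}), these become the first terms of \eqref{R2}--\eqref{R3}. The off-diagonal parts must produce the product term $-n^{-1}\rE[p_r(\xi)(\xi-r)]\rE[p_t(\xi)(\xi-t)]$.

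The core is a one-variable expansion with explicit remainder. For $x=nq\le\beta$ and $q\le 1/4$, I would write
\[
\log\frac{b_r(n,q)}{p_r(x)}=\sum_{i<r}\log(1-i/n)+(n-r)\log(1-q)+x,
\]
and Taylor expand: $\sum_{i<r}\log(1-i/n)=-r(r-1)/(2n)+O(r^3/n^2)$ and $(n-r)\log(1-q)+x=rx/n-x^2/(2n)+O(x^3/n^2+rx^2/n^2)$. The first-order correction is then $(2n)^{-1}(r-(x-r)^2)$, because $-r(r-1)+2rx-x^2=r-(x-r)^2$. Exponentiating with the elementary bounds $1+y\le e^y\le 1+y+y^2e^{|y|}/2$ and using $\log(1-q)\ge -q-q^2$ for $q\le1/4$ gives two-sided bounds on $R_1$. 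Each pointwise bound has the form $p_r(x)\cdot\mathrm{poly}(r,x)$, and $x^r\le\beta^r$ yields the factor $\beta^r/r!$ in \eqref{D1}. Averaging over $X$ then gives \eqref{R1}.

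For the off-diagonal terms I would apply the same expansion to the ratio
\[
\frac{P(\nu_k=r,\nu_j=t)}{b_r(q_k)\,b_t(q_j)}
=\frac{(n-r)!\,(n-t)!}{n!\,(n-r-t)!}\cdot\frac{(1-q_k-q_j)^{n-r-t}}{(1-q_k)^{n-r}(1-q_j)^{n-t}}.
\]
Its logarithm equals $-rt/n-x_kx_j/n+rx_j/n+tx_k/n+O(n^{-2})=-(x_k-r)(x_j-t)/n+O(n^{-2})$, with $x_k=nq_k$ and $x_j=nq_j$. Multiplying by $b_rb_t$, summing over $k\ne j$ and dividing by $N^2$ gives, to leading order, $-n^{-1}\rE[p_r(\xi)(\xi-r)]\,\rE[p_t(\xi)(\xi-t)]$. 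One subtlety is that the sum runs over $k\ne j$, not all pairs. The missing diagonal contributes $N^{-2}\sum_k(\ldots)=O(n^{-1}\cdot n^{-1})$, which goes into $R_2$. The same holds for the replacement of $b$ by $p$ inside the product, whose cross-errors are $O(n^{-1})\times O(n^{-1})$.

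The main obstacle is bookkeeping the explicit constants in $L_2,K_2$. This requires two-sided bounds on $e^{y}-1-y$ for the bivariate ratio, control of $\log(1-q_k-q_j)$ when $q_k+q_j\le1/2$, and bounds on products of the first-order corrections, which produce the $2^{2r}$ and $\beta^2$ terms. I would first fix uniform inequalities $|y|\le c(u^3+\beta^2u)/n$ for the log-ratio. After that I would track signs separately to obtain the lower and upper bounds, using $u=\max(r,t)$ and $x^r,x^t\le\beta^u\cdot\beta^u$ only at the final step.
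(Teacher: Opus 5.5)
Your reduction of the main terms is sound, and it takes a different route from the paper's. The paper adds the diagonal back into the $k\ne l$ sum and expands $\binom{n}{r,t,n-r-t}$ and $(1-q_X-q_Y)^{n-r-t}$ to second order (Lemmas~\ref{L4}--\ref{Co2}). It also expands $\rE[\hat q_r]\rE[\hat q_t]$ to second order, and recovers $-\rE[p_r(\xi)(\xi-r)]\rE[p_t(\xi)(\xi-t)]$ only through the cancellation $I_1-I_3$. Your ratio $\rho_{kj}$, with $\log\rho_{kj}=-(x_k-r)(x_j-t)/n+O(n^{-2})$, produces the factorized term directly from a first-order expansion.

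\textbf{The gap: the explicit bounds.} Since $R_1$ and $R_2$ are defined as remainders, the content of the theorem is \eqref{D1}--\eqref{D3}, and you only assert that bookkeeping will deliver those constants. It cannot, at least for $r=0$, where \eqref{D1} claims $R_1(n,0)\le 0$. In your own expansion, $y=\log(b_0/p_0)=-x^2/(2n)-x^3/(3n^2)-\cdots$. The second-order term $y^2/2\approx x^4/(8n^2)$ of $e^y$ gives $R_1(n,0)\to\rE[e^{-\xi}(\xi^4/8-\xi^3/3)]$ as $n\to\infty$ with the law of $\xi$ fixed, and this is positive once $\xi$ sits above $8/3$.

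Concretely, take the equiprobable model with $N=250$, $n=1000$, so $\alpha=\beta=4$ and $q_1=1/250$. Then $\rE[\hat q_0]=(0.996)^{1000}$ exceeds $e^{-4}(1-0.008)$ by about $1.96\cdot 10^{-7}$, i.e.\ $R_1(n,0)\approx 0.196>0$. This contradicts \eqref{D1}, and also the Corollary and \eqref{Ex1}.

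The paper's proof fails at the matching point. The upper bound in \eqref{del*} of Lemma~\ref{L5} discards the $+x^4/(4n^2)$ term present in its own intermediate display, which is exactly your $y^2/2$ term. So along either route, what you can actually prove is an upper bound with an extra positive term: at $r=0$ roughly $\rE[\xi^4e^{-\xi}]/8$, which can reach $\sup_x x^4e^{-x}/8\approx 0.59$. The stated constants must be corrected before a proof can be completed.

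\textbf{The $\alpha R_1$ term in \eqref{R2}.} Your justification here is also off. Replacing $\rE[b_r]=\rE[\hat q_r]$ by $\rE[p_r(\xi)]$ in the diagonal term $n^{-1}\alpha\rE[\hat q_r]$ costs $n^{-2}\alpha R_0(n,r)$, not $n^{-2}\alpha R_1$. By \eqref{R1} and Lemma~\ref{Ld}, $R_0=\tfrac12\rE[p_r(\xi)(r-(\xi-r)^2)]+n^{-1}R_1$. The paper's own route, $\rV[\hat q_r]=\Phi_n(r,r)+N^{-1}\rE[\hat q_r]$ together with \eqref{sist} at $t=r$, also gives $\alpha R_0$. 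So either $\alpha R_1$ in \eqref{R2} is a misprint for $\alpha R_0$, or $R_2(n,r,r)$ has to absorb $\alpha(R_0-R_1)$ and its bound in \eqref{D3} must be rechecked.

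In any case, your ratio method will not reproduce $L_2,K_2$ or the polynomials in \eqref{D1}. You need to state and prove explicit constants of your own rather than defer them to bookkeeping.
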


\begin{corollary}\label{coro}
For the proportion of empty boxes $\hat q_0$, we get
 \begin{align*}
\rE[\hat q_0]&=\rE[e^{-\xi}]-(2n)^{-1}\rE[\xi^2 e^{-\xi}]+n^{-2}R_1(n),\qquad\qquad\qquad\qquad    \\
\rV[\hat q_0]&=n^{-1}\alpha\rE[e^{-\xi}(1-e^{-\xi})]-n^{-1}(\rE[\xi e^{-\xi}])^2+n^{-2}R_2(n),
\end{align*}
with
\[-4\le R_1(n) \le 0,\qquad -\beta^2-7\beta-4\alpha -12\le R_2(n) \le 8.\]
\end{corollary}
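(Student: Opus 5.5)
The corollary is the case $r=0$ of Theorem~\ref{tm1}. The plan is to substitute $r=0$ into \eqref{R1}--\eqref{R2} and the bounds \eqref{D1}--\eqref{D3}, then simplify. Since $p_0(x)=e^{-x}$, we have $p_0(\xi)(0-(\xi-0)^2)=-\xi^2e^{-\xi}$ and $p_0(\xi)(\xi-0)=\xi e^{-\xi}$. The main terms of the two displayed formulas are therefore exactly those of \eqref{R1} and \eqref{R2} at $r=0$. In the variance formula we take the remainder $R_2(n):=R_2(n,0,0)+\alpha R_1(n,0)$, and in the mean formula $R_1(n):=R_1(n,0)$.

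For $R_1(n)$ we put $r=0$ in \eqref{D1}, using the conventions $\beta^0=1$ and $0!=1$. The lower bound is $-(0\cdot 2^{-1}+4)=-4$. The upper bound is $0\cdot(0+4\beta^2)=0$. Hence $-4\le R_1(n)\le0$.

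For $R_2(n)$ we first evaluate \eqref{D3} at $r=t=0$, so that $u=0$ and $\beta^{2u}=1$. This gives
\[
L_2(0,\beta)=12+2\beta+5(0+\beta)\cdot 1+\beta^2(0+1)=12+7\beta+\beta^2,
\qquad
K_2(0,\beta)=8+0+0+0=8.
\]
We then combine this with $\alpha R_1(n,0)\in[-4\alpha,0]$, which follows from $\alpha>0$ and the bound on $R_1(n)$. Adding the two ranges gives $-\beta^2-7\beta-12-4\alpha\le R_2(n)\le 8$, which is the stated bound.

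The only point needing care is that \eqref{D3} is stated for $r\ne t$ in the covariance formula, while here we use $R_2(n,r,r)$ from the variance formula \eqref{R2}. We rely on the theorem's bound \eqref{D3} being asserted for $R_2(n,r,t)$ with arbitrary $r,t$, including $r=t$, as the use of $u=\max(r,t)$ suggests. Since the bounds are taken from the theorem, the hypothesis $q_1\le 1/4$ is implicitly required for the corollary as well.
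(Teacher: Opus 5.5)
Your route is the paper's route: the corollary is just Theorem~\ref{tm1} read at $r=t=0$. Your arithmetic is right: $-4\le R_1(n,0)\le 0$, $L_2(0,\beta)=12+7\beta+\beta^2$, $K_2(0,\beta)=8$, and the shift by $\alpha R_1(n,0)$ reproduce the stated constants. Your remarks about $r=t$ in \eqref{D3} and the tacit hypothesis $q_1\le 1/4$ are also correct. But this only shows that the corollary follows from Theorem~\ref{tm1} as printed. At $r=0$ the printed ingredients fail, so your argument does not prove the statement; in fact the statement is false.

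For the mean, $R_1(n)=\rE[\delta_n^*(0,\xi)]$, and \eqref{del*} asserts $\delta_n^*(0,x)\le 0$. In fact $\delta_n^*(0,x)=n^2\bigl[(1-x/n)^n-e^{-x}(1-x^2/(2n))\bigr]\to e^{-x}(x^4/8-x^3/3)$, which is positive for $x>8/3$. Concretely, take the equiprobable model with $N=25$, $n=100$ (so $q_1=0.04$, $\alpha=\beta=4$). Then $(0.96)^{100}\approx 0.0168703$ while $e^{-4}(1-16/200)\approx 0.0168504$, so $R_1(n)\approx 0.2>0$; this also violates the upper bound in \eqref{Ex1}. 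The proof of Lemma~\ref{L5} keeps an $x^4/n^2$ term in its upper bound for $e^{n\ln(1-x/n)}$ but drops it from \eqref{del*}, and at $r=0$ nothing absorbs it. What one can actually prove is $\delta_n^*(0,x)\le e^{-x}x^4/8$, i.e.\ $R_1(n)\le\frac18\rE[\xi^4e^{-\xi}]\le 32e^{-4}$.

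The variance part has two further slips.

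First, by Lemma~\ref{tm0}, $\rV[\hat q_0]=\Phi_n(0,0)+N^{-1}\rE[\hat q_0]$ with $N^{-1}=\alpha/n$. So the second term contributes $\alpha R_0(n,0)$ at order $n^{-2}$ (with $R_0$ from Lemma~\ref{Ld}), not $\alpha R_1(n,0)$ as written in \eqref{R2}.

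Second, expanding $-N^{-1}\rE[(1-2q_X)^n]$ puts $-\alpha\rE[\delta_n(0,2\xi)]\ge 0$ into $R_2(n,0,0)$. The paper's $I_2$, and hence its formula for $R_2(n,r,t)$, carries this $\alpha$-term with the opposite sign.

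Together these give a true remainder $R_2(n)=\alpha\bigl(2\rE[\xi^2e^{-2\xi}]-\tfrac12\rE[\xi^2e^{-\xi}]\bigr)$ plus a term bounded by an absolute constant. This is not bounded by $8$ when $\alpha$ is large and $\xi$ is mostly of order one. For instance, take $N=1000$ and $n=2\cdot 10^5$, with $996$ boxes of weight $1/n$ and four boxes of weight $\approx 0.2488$ (so $q_1\le 1/4$ and $\alpha=200$). Up to exponentially small terms the exact variance is $N^{-2}[Lp(1-p)+L(L-1)(p_2-p^2)]$, with $L=996$, $p=(1-1/n)^n$, $p_2=(1-2/n)^n$. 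Expanding it gives $R_2(n)\approx 0.996\cdot 200\,(2e^{-2}-e^{-1}/2)-0.07\approx 17>8$.

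So both upper bounds in the statement must be repaired before a specialization argument like yours can be valid: for example $R_1(n)\le 32e^{-4}$, and an upper bound for $R_2(n)$ that grows linearly in $\alpha$.
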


\begin{figure}[ht]
  \centering
  \includegraphics[height=5.1cm]{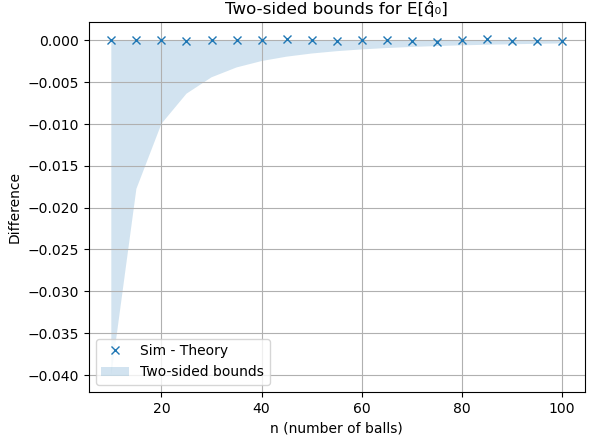} \includegraphics[height=5.2cm]{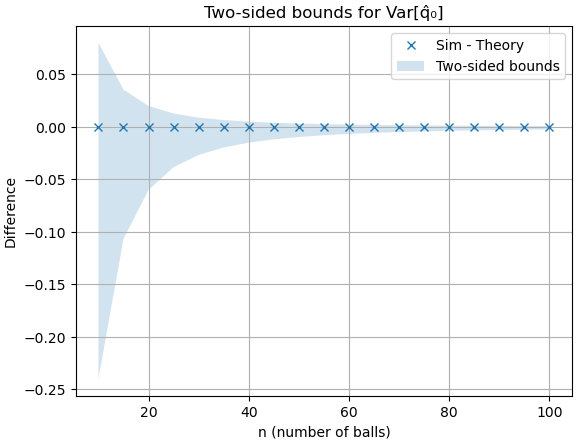}
\caption{Simulation results illustrating the bounds~\eqref{Ex1}--\eqref{Ex2} for $N = 100$ and $n = 10, 15, \ldots, 100$.}
  \label{fig1}
\end{figure}

\vspace{2mm}
\noindent\textbf{Example.}
Turning to the equiprobable allocation, we obtain
\begin{align}
-4n^{-2} \le\;& \rE[\hat q_0] - e^{-\alpha} + \tfrac{1}{2}n^{-1}\alpha^2 e^{-\alpha} \le 0, \label{Ex1}\\
-(\alpha^2 + 11\alpha + 12)n^{-2} \le\;& \rV[\hat q_0] - n^{-1}\bigl(\alpha e^{-\alpha} - \alpha e^{-2\alpha} - \alpha^2 e^{-2\alpha}\bigr) \le 8n^{-2}. \label{Ex2}
\end{align}
Relation~\eqref{Ex1} is informative provided that the higher-order term $4n^{-2}$ is smaller than $\tfrac{1}{2}n^{-1}\alpha^2 e^{-\alpha}$, that is,
\[
n > \tfrac{1}{8}\alpha^{-2} e^{\alpha}.
\]
Thus the two-sided bounds \eqref{Ex1}  remain useful beyond the central region, when $C_1 \le \alpha \le C_2$.

Figure~\ref{fig1} illustrates the bounds~\eqref{Ex1}--\eqref{Ex2} for $N = 100$ and $n = 10, 15, \ldots, 100$. For each $n$, the quantities $\rE[\hat q_0]$ and $\rV[\hat q_0]$ are estimated from $50{,}000$ independent simulations of the equiprobable allocation model. The plots show the differences between the simulated means and variances and their corresponding theoretical approximations (up to terms of order $n^{-1}$), together with the associated two-sided bounds of order $n^{-2}$.

\section{Auxiliary results}\label{kl}

\begin{lemma}\label{tm0}
Define
\[\Phi_n(r,t)=\binom{n}{r,t,n-r-t}
\Big(
 \rE\!\left[q_X^r q_Y^t (1-q_X-q_Y)_+^{\,n-r-t}\right]
- N^{-1} \rE\!\left[q_X^{r+t}(1-2q_X)_+^{\,n-r-t}\right]\Big)-\rE[\hat q_r]\rE[\hat q_t],\]
where $q_Y$ is an independent copy of $q_X$, and $(a)_+ := \max(a,0)$. 
Then for all $r \ne t$, 
\begin{align*}
\rE[\hat q_r]
&= \binom{n}{r}\, \rE\!\left[q_X^r (1-q_X)^{n-r}\right],\qquad
\rV[\hat q_r]
= \Phi_n(r,r)+N^{-1}\rE[\hat q_r],\qquad
\rC[\hat q_r ,\hat q_t]= \Phi_n(r,t).
\end{align*}
\end{lemma}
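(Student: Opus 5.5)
We will use indicator variables. Write $\hat N_r=\sum_{k=1}^N I_k^{(r)}$, where $I_k^{(r)}$ is the indicator that box $k$ receives exactly $r$ balls. Then $\hat q_r=N^{-1}\sum_k I_k^{(r)}$. The load of box $k$ is $\mathrm{Bin}(n,q_k)$, so
\[
\rE[\hat q_r]=N^{-1}\sum_k\binom{n}{r}q_k^r(1-q_k)^{n-r}=\binom{n}{r}\rE[q_X^r(1-q_X)^{n-r}],
\]
since averaging over $k$ with weight $N^{-1}$ is exactly expectation over $X\sim\mathrm{Uniform}\{1,\ldots,N\}$. This gives the first formula.

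For second moments we compute $\rE[\hat q_r\hat q_t]=N^{-2}\sum_{k,l}P(I_k^{(r)}=1,I_l^{(t)}=1)$ and separate the pairs $k\ne l$ from the diagonal $k=l$. For $k\ne l$ the pair of loads of boxes $k,l$, together with the remainder, is multinomial with probabilities $(q_k,q_l,1-q_k-q_l)$. Hence
\[
P(I_k^{(r)}=1,I_l^{(t)}=1)=\binom{n}{r,t,n-r-t}q_k^rq_l^t(1-q_k-q_l)^{n-r-t}.
\]
Here $1-q_k-q_l\ge0$ automatically when $k\ne l$, and the formula remains valid when $r+t>n$, since the multinomial coefficient then vanishes.

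Next we replace the sum over $k\ne l$ by the full double sum minus its diagonal. We have $N^{-2}\sum_{k,l}f(q_k,q_l)=\rE[f(q_X,q_Y)]$ with $Y$ an independent uniform index, and the diagonal contributes $N^{-2}\sum_k f(q_k,q_k)=N^{-1}\rE[f(q_X,q_X)]$. On the diagonal the base $1-2q_k$ may be negative, for instance when $q_1>1/2$, so it has no probabilistic meaning there. The positive part $(\cdot)_+$ is inserted precisely so that the extended full sum and the subtracted diagonal are the same formal expression.

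The point needing care is whether the added-then-subtracted diagonal terms really cancel. If $1-2q_k<0$ and $n-r-t=0$, then $(a)_+^0=1$ appears in both expressions with the same value, so they still cancel. If $1-2q_k<0$ and $n-r-t>0$, both terms are $0$, so again there is nothing to worry about. Consistency here only requires that the same convention for $(a)_+^0$ is used throughout.

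For the diagonal pairs $k=l$ we use $I_k^{(r)}I_k^{(t)}=0$ when $r\ne t$ and $=I_k^{(r)}$ when $r=t$. So the diagonal contributes $0$ in the covariance case and $N^{-2}\sum_kP(I_k^{(r)}=1)=N^{-1}\rE[\hat q_r]$ in the variance case.

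Subtracting $\rE[\hat q_r]\rE[\hat q_t]$ from $\rE[\hat q_r\hat q_t]$ then gives $\rC[\hat q_r,\hat q_t]=\Phi_n(r,t)$ for $r\ne t$. Setting $t=r$ gives $\rV[\hat q_r]=\Phi_n(r,r)+N^{-1}\rE[\hat q_r]$. These are the remaining two formulas of the lemma.
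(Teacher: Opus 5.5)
Your proof is correct and follows essentially the paper's route. The paper quotes the formulas for $\rE[\hat N_r]$, $\rE[\hat N_r(\hat N_r-1)]$ and $\rE[\hat N_r\hat N_t]$ from Kolchin--Sevast'yanov--Chistyakov, which you instead rederive via indicators and the multinomial law of two box loads, and then rewrites $N^{-2}\sum_{k\ne l}$ as the expectation over independent uniform $X,Y$ minus the diagonal, exactly as you do. Your diagonal term $N^{-1}\rE[f(q_X,q_X)]$ is the correct one; the paper's displayed identity has $G(q_X,q_Y)$ there, which is a typo.
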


\begin{proof}
According to formulas (8), (9), and (10) in \cite[Ch.~III.1]{KSC},
\begin{align*}
\rE[\hat N_r]
&= \binom{n}{r}\sum_{k=1}^N q_k^r(1-q_k)^{\,n-r},\\
\rE[\hat N_r(\hat N_r-1)]
&= \binom{n}{r,r,n-2r}
   \sum_{k\ne l} q_k^r q_l^r(1-q_k-q_l)^{\,n-2r},\\
\rE[\hat N_r \hat N_t]
&= \binom{n}{r,t,n-r-t}
   \sum_{k\ne l} q_k^r q_l^t(1-q_k-q_l)^{\,n-r-t},\quad r \ne t.
\end{align*}
Observe that for any functions $F(\cdot)$ and $G(\cdot,\cdot)$,
\begin{align*}
N^{-1}\sum_{k=1}^N F(q_k) &= \rE[F(q_X)], \\
N^{-2}\sum_{k,l=1}^N G(q_k,q_l) &= \rE[G(q_X,q_Y)].
\end{align*}
Hence,
\[
N^{-2}\sum_{k\ne l} G(q_k,q_l)
= \rE[G(q_X,q_Y)] - N^{-1} \rE[G(q_X,q_Y)],
\]
which yields the stated expressions and completes the proof.
\end{proof}

\noindent\textbf{Remark.} Note that if $q_1 \le \tfrac{1}{2}$, then
\[
(1-q_X-q_Y)_+ = 1-q_X-q_Y, \qquad (1-2q_X)_+ = 1-2q_X.
\]
We impose the stronger condition $q_1 \le \tfrac{1}{4}$ in order to apply Lemma~\ref{L5} below.

\begin{lemma}\label{L4}
 For $0\le r\le n$, $n\ge1$, define $\varphi_n(\cdot)$  by
\[ \frac{n!}{(n-r)!}=n^r (1-n^{-1}\varphi_n(r)).\]
Then
\[ 0\le \varphi_n(r)\le \binom{r}{2},\]
and moreover, $\varphi^*_n(r)$ defined by
\[ \varphi_n(r)=\binom{r}{2}-n^{-1}\varphi^*_n(r),\]
satisfies
\[  0\le \varphi^*_n(r)\le r^4,\quad 0\le r\le n.\]
\end{lemma}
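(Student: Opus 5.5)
We need to show two things: $0\le\varphi_n(r)\le\binom r2$, and with $\varphi_n(r)=\binom r2-n^{-1}\varphi^*_n(r)$, that $0\le\varphi^*_n(r)\le r^4$. The plan is to write the falling factorial as a product,
\[
\frac{n!}{(n-r)!}=n^r\prod_{j=0}^{r-1}\Bigl(1-\frac jn\Bigr),
\]
so that
\[
n^{-1}\varphi_n(r)=1-\prod_{j=0}^{r-1}(1-j/n),
\]
and then bound the product $P:=\prod_{j<r}(1-x_j)$, with $x_j=j/n\in[0,1)$ for $j\le r-1\le n-1$, from both sides by elementary inequalities. The case $r\le1$ is trivial, since then $\varphi_n(r)=\varphi^*_n(r)=0$.

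\emph{First claim.} Since $P\le1$, we get $\varphi_n(r)\ge0$. For the upper bound, use the Weierstrass product inequality $\prod(1-x_j)\ge 1-\sum x_j$, which holds for $x_j\in[0,1]$ by induction. It gives $1-P\le\sum_{j<r}j/n=\binom r2/n$, hence $\varphi_n(r)\le\binom r2$.

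\emph{Second claim.} Here
\[
n^{-2}\varphi^*_n(r)=n^{-1}\binom r2-n^{-1}\varphi_n(r)=P-\Bigl(1-\sum_j x_j\Bigr),
\]
so $\varphi^*_n(r)\ge0$ is exactly the Weierstrass inequality again. For the upper bound, use the second-order Bonferroni-type inequality
\[
\prod(1-x_j)\le 1-\sum_j x_j+\sum_{i<j}x_ix_j,
\]
valid for $x_j\in[0,1]$. I would prove it by induction on the number of factors: multiplying by $(1-x_m)$ and using the first-order lower bound $\prod_{j<m}(1-x_j)\ge 1-\sum_{j<m}x_j$ on the term subtracted, the new cross terms appear while a nonnegative cubic remainder $-x_m\sum_{i<j<m}x_ix_j\le0$ is discarded. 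This step, checking the inductive inequality cleanly, is the only mildly delicate point. It yields
\[
\varphi^*_n(r)\le\sum_{0\le i<j\le r-1}ij\le\frac12\Bigl(\sum_{j<r}j\Bigr)^2=\frac12\binom r2^2\le\frac{r^4}{8}\le r^4.
\]

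Alternatively, one can avoid the induction by taking logarithms, using $-x-x^2\le\log(1-x)\le -x$ for $x\le1/2$. That route would need a separate treatment of large $x$, so I prefer the product inequalities, which hold uniformly for $0\le r\le n$.
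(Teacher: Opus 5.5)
Your argument is correct, but it is organized differently from the paper's. The paper never bounds the full product $P=\prod_{j<r}(1-j/n)$ by inequalities. Instead it uses the exact telescoping identity
\[
\varphi_n(r)=\sum_{k=1}^{r-1}k\prod_{j=1}^{k-1}\Bigl(1-\frac jn\Bigr).
\]
From this, $0\le\varphi_n(r)\le\binom r2$ is read off directly, since each summand lies in $[0,k]$. Then $n^{-1}\varphi_n^*(r)=\sum_{k=1}^{r-1}k\bigl(1-\prod_{j<k}(1-j/n)\bigr)$ is bounded termwise by the first-order estimate $1-\prod_{j<k}(1-j/n)\le k^2/n$, giving $\sum_k k^3\le r^4$.

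So the paper needs no second-order inequality. Its identity is just $1-P=\sum_k x_kP_k$ with $P_k=\prod_{j<k}(1-x_j)$, which is the unrolled form of your induction. It gives $\sum_k x_k-(1-P)=\sum_k x_k(1-P_k)\le\sum_{i<j}x_ix_j$ in one line.

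Your route packages the proof as the two standard Bonferroni-type bounds for $\prod(1-x_j)$ with $x_j\in[0,1]$, which is tidy and reusable. It also keeps the asymptotically sharp quantity $\sum_{0\le i<j\le r-1}ij=\lim_{n\to\infty}\varphi_n^*(r)$ before relaxing to $r^4/8$. The paper's termwise bound, taken with $\binom k2$ instead of $k^2$, would give the same sum.

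On the step you flagged, your description merges two valid inductions; either one closes the argument.
\begin{itemize}
\item Multiply the hypothesis $P_m\le 1-\sum_{j<m}x_j+\sum_{i<j<m}x_ix_j$ by $1-x_m\ge 0$ and drop the term $-x_m\sum_{i<j<m}x_ix_j\le 0$. No Weierstrass inequality is needed.
\item Write $P_{m+1}=P_m-x_mP_m$, use the hypothesis on $P_m$ and Weierstrass on $x_mP_m$. Then no cubic term arises.
\end{itemize}
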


\begin{proof} 
Since
 \[\varphi_n(r)=\frac{n^{r}-n(n-1)\cdots(n-r+1)}{n^{r-1}}=\sum_{k=1}^{r-1} kn^{1-k}(n-1)\cdots(n-k+1), \]
we get
 \[0\le \varphi_n(r)\le \sum_{k=1}^{r-1} k=\binom{r}{2}.\]
 It remains to observe that
\begin{align*}
n^{-1}\varphi^*_n(r)&=\binom{r}{2}-\sum_{k=1}^{r-1} kn^{1-k}(n-1)\cdots(n-k+1)\\
&=\sum_{k=1}^{r-1} k\Big(1-\big(1-\tfrac{1}{n}\big)\cdots\big(1-\tfrac{k-1}{n}\big)\Big)\le n^{-1} \sum_{k=1}^{r-1} k^3\le n^{-1}r^4.
\end{align*}

 \end{proof}

\begin{lemma}\label{L5}
For $n \ge 1$ and $0\le r\le n$, define $\delta_n(r,x)$ by
\[
\left(1-\frac{x}{n}\right)^{n-r} = e^{-x} + n^{-1}\delta_n(r,x).
\]
If $0 \le x \le \frac{n}{2}$, then
\begin{equation}\label{del}
 -2 \le \delta_n(r,x) \le  r2^{r-1},
\end{equation}
and moreover, $\delta^*_n(r,x)$ defined by
\[\delta_n(r,x)=e^{-x}x(r-x/2)+n^{-1}\delta^*_n(r,x),
\]
satisfies 
\begin{equation}\label{del*}
-4-r\le \delta^*_n(r,x)\le x^2r(r+1) 2^{\,r+1}.
\end{equation}
\end{lemma}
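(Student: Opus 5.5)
We write $y=x/n\in[0,\tfrac12]$ and work with the exponent
\[
E(r,x):=(n-r)\log(1-y)+x ,\qquad\text{so that}\qquad \delta_n(r,x)=n e^{-x}\bigl(e^{E}-1\bigr).
\]
Every step rests on elementary two-sided bounds for $\log(1-y)$ on $[0,\tfrac12]$:
\[
-y-\tfrac{y^2}{2}-\tfrac{y^3}{3(1-y)}\;\le\;\log(1-y)\;\le\;-y-\tfrac{y^2}{2}-\tfrac{y^3}{3},
\]
and in particular $\log(1-y)\ge -y-y^2$. We also use repeatedly that factors $x^k e^{-x}$ are bounded by $(k/e)^k$. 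This is how polynomial growth in $x$ is absorbed into absolute constants.

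\emph{First-order bounds \eqref{del}.} For the lower bound we drop the factor $(1-y)^{-r}\ge1$:
\[
(1-y)^{n-r}\ge(1-y)^n\ge e^{-x-x^2/n}\ge e^{-x}\bigl(1-x^2/n\bigr).
\]
This gives $\delta_n(r,x)\ge -x^2e^{-x}\ge -4e^{-2}\ge-2$. For the upper bound we write
\[
(1-y)^{n-r}=(1-y)^n(1-y)^{-r}\le e^{-x}(1-y)^{-r},
\]
so that $\delta_n\le n e^{-x}\bigl((1-y)^{-r}-1\bigr)$. Then we bound $(1-y)^{-r}-1$ by the mean value theorem or by convexity. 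Since $y\le\tfrac12$, the chord of the convex function $(1-y)^{-r}$ on $[0,\tfrac12]$ gives
\[
(1-y)^{-r}-1\le 2y(2^r-1)\le y\,r\,2^{r}.
\]
Hence $\delta_n\le r2^r\,xe^{-x}\le r2^r/e\le r2^{r-1}$. If this chord estimate turns out to be too crude in some range, we fall back on $(1-y)^{-r}\le e^{ry/(1-y)}$.

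\emph{Second-order bounds \eqref{del*}.} Put $E_0:=y(r-x/2)=\tfrac{x}{n}(r-\tfrac{x}{2})$. The logarithm bounds give
\[
E-E_0=\tfrac{rx^2}{2n^2}+(n-r)\cdot O(y^3),
\]
with explicit signs. More precisely, $E-E_0\le \tfrac{rx^2}{2n^2}$ and $E-E_0\ge -\tfrac{2x^3}{3n^2}$, the latter using $1/(1-y)\le 2$. We then decompose
\[
\delta^*_n=n^2e^{-x}\bigl(e^E-1-E\bigr)+n^2e^{-x}(E-E_0).
\]
The first term is nonnegative and at most $\tfrac12 n^2E^2e^{\max(E,0)}e^{-x}$. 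Here $E\le ry\le r/2$, so $e^{\max(E,0)}\le 2^{r}$. Also $n^2E^2\le x^2(r+x)^2$-type quantities, and after multiplying by $e^{-x}$ these are bounded by $x^2 r(r+1)2^{r+1}$-type expressions. The second term is bounded above by $\tfrac12 r x^2e^{-x}$, which is also absorbed into the upper bound. It is bounded below by $-\tfrac23x^3e^{-x}\ge -\tfrac23(3/e)^3$. Together with a possible negative contribution of order $r$ from the cross term, this yields the lower bound $-4-r$.

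The main obstacle is bookkeeping: getting exactly the constants $r2^{r-1}$, $-4-r$ and $x^2r(r+1)2^{r+1}$. This requires choosing where to spend the $e^{-x}$ factor. For the upper bound in \eqref{del*} we keep the factor $x^2$ explicit, as the statement does, rather than absorbing it. For the lower bound we absorb all powers of $x$ via $x^ke^{-x}\le (k/e)^k$. If a cruder estimate such as $e^{r/2}$ in place of $2^r$ appears, we will first try to replace it using $r\,y\le r/2$ and $e^{1/2}<2$.
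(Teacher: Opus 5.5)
Your proof of \eqref{del} is correct. The upper bound in \eqref{del*}, however, fails exactly at the step you wave through as ``$x^2r(r+1)2^{r+1}$-type expressions''. At $r=0$ the right-hand side is $0$, while your first term $n^2e^{-x}(e^E-1-E)$ is strictly positive. There $E\approx-x^2/(2n)$, so this term is about $\tfrac18x^4e^{-x}$, and the only negative piece, $n^2e^{-x}(E-E_0)$, is about $-\tfrac13x^3e^{-x}$.

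No bookkeeping can fix this, because the inequality is false for $r=0$. Expanding $E=-\tfrac{x^2}{2n}-\tfrac{x^3}{3n^2}+O(n^{-3})$ gives $\delta^*_n(0,x)\to e^{-x}\bigl(\tfrac{x^4}{8}-\tfrac{x^3}{3}\bigr)$, which is positive for fixed $x>8/3$. Concretely, $n=100$ and $x=4$ give $0.96^{100}-0.92\,e^{-4}\approx1.99\cdot10^{-5}$, i.e.\ $\delta^*_{100}(0,4)\approx0.199>0$.

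The paper's proof has the same hole: its upper estimate for $e^{n\ln(1-x/n)}$ carries an $x^4/n^2$ term that cannot be absorbed when $r=0$. The same numbers with $N=25$ also contradict the upper bound in \eqref{Ex1}. What your decomposition does give for $r=0$ is $\delta^*_n(0,x)\le\tfrac12x^4e^{-x}$, since $-x^2/n\le E\le0$. You should have flagged this exception rather than asserting the bound for all $0\le r\le n$.

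For $r\ge1$ your argument is sound, and it takes a different route from the paper. From $y(r-x)\le E\le ry$ you get $n^2E^2\le x^2\max(r,x)^2$. Since $\max(r,x)^2e^{-x}\le r^2$, this yields $\delta^*_n\le x^2(2^{r-1}r^2+r/2)\le x^2r(r+1)2^{r+1}$.

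The paper instead bounds $e^{n\ln(1-x/n)}$ and $(1-x/n)^{-r}$ separately by Taylor polynomials with Lagrange remainders, then multiplies the two-sided estimates. Your single-exponent split $\delta^*_n=n^2e^{-x}(e^E-1-E)+n^2e^{-x}(E-E_0)$ is cleaner and gives a much better lower bound. Because the first term is nonnegative, $\delta^*_n\ge-\tfrac23(3/e)^3>-1$ uniformly in $r$. In particular there is no negative ``cross term'' of order $r$. The $r$-dependence in $-4-r$ comes from the paper dropping the quadratic terms in both lower Taylor bounds, which leaves an uncompensated $-\tfrac r2x^3e^{-x}$.
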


\begin{proof}
We use the representation
\[
\left(1-\frac{x}{n}\right)^{n-r}
= \left(1-\frac{x}{n}\right)^{-r} e^{\,n\ln(1-x/n)}
\]
and repeatedly apply Taylor's theorem with Lagrange remainder. By
\[
\ln(1-y) = -y - \frac{y^2}{2(1-\theta y)^2}, \qquad \theta \in [0,1],
\]
we have
\[
-y - 2y^2\le \ln(1-y)\le -y,\quad 0\le y\le 1/2.
\]
With $y = \frac{x}{n}$, it follows that
\[
-x - \frac{2x^2}{n}
\le n\ln\left(1-\frac{x}{n}\right)
\le -x,\quad 0\le x\le n/2.
\]
Exponentiating and using the inequality $e^{-t} \ge 1 - t$ for $t \ge 0$, we obtain
\[
e^{-x}\left(1 - \frac{2x^2}{n}\right)
\le e^{\,n\ln(1-x/n)}
\le e^{-x}.
\]

On the other hand,\[
1 \le \left(1-\frac{x}{n}\right)^{-r}
\le 1 + \frac{xr}{n}2^{\,r}.
\]
Combining the bounds, we find
\[
-2x^2 e^{-x}
\le \delta_n(r,x)
\le e^{-x} x r 2^{\,r},
\]
which  yields \eqref{del} in view of $e^{-x}x<1/2$ and $e^{-x}x^2<1$.

The higher order Taylor's expansion,
\[
\ln(1-y) = -y - \frac{y^2}{2}- \frac{y^3}{3(1-\theta y)^3}, \qquad \theta \in [0,1],
\]
yields
\[
-x - \frac{x^2}{2n}- \frac{8x^3}{3n^2}
\le n\ln\left(1-\frac{x}{n}\right)
\le -x- \frac{x^2}{2n},\quad 0\le x\le n/2.
\]
Exponentiating and using 
\[e^{-t} = 1 - t+\frac{t^2}{2}e^{-\theta't},\quad t \ge 0, \qquad \theta' \in [0,1],\]
we obtain
\[
e^{-x}\left(1 - \frac{x^2}{2n}- \frac{8x^3}{3n^2}\right)
\le e^{\,n\ln(1-x/n)}
\le e^{-x}\left(1 - \frac{x^2}{2n}\right)+\frac{x^4}{4n^2}.
\]

On the other hand,
\[
 \left(1-\frac{x}{n}\right)^{-r}
= 1 + \frac{xr}{n}+\frac{r(1+r)x^2}{2n^2}\left(1-\frac{\theta''x}{n}\right)^{-r-2}, \qquad \theta'' \in [0,1],
\]
yields
\[
1 + \frac{rx}{n}\le \left(1-\frac{x}{n}\right)^{-r}
\le 1 + \frac{rx}{n}+\frac{r(1+r)x^2}{n^2}2^{r+1},
\]
Combining these bounds, and using $e^{-x}x^3<3/2$, we establish \eqref{del*}.
\end{proof}

\begin{lemma}\label{Co2}
Denote
\[Q_n(r,t):=(r!t!)^{-1}\rE\Big[\xi^r\eta^t(1-q_X-q_Y)^{n-r-t}\Big],\]
where $\eta=nq_Y$.
Then
\begin{equation}\label{Q1}
Q_n(r,t)=\rE[p_r(\xi)]\rE[p_t(\xi)]+n^{-1}(r! t!)^{-1}\rE\Big[\xi^r\eta^t\delta_n(r+t,\xi+\eta)\Big],
\end{equation}
 and 
\begin{align}\label{Q2}
 Q_n(r,t)&=\rE[p_r(\xi)]\rE[p_t(\xi)]+n^{-1}\Big((r+t)\big(\rE[p_r(\xi)]\rE[p_t(\xi)\xi]+\rE[p_t(\xi)]\rE[p_r(\xi)\xi]\big)-\rE[p_r(\xi)\xi]\rE[p_t(\xi)\xi]\Big) \nonumber\\
&-(2n)^{-1}\Big(\rE[p_r(\xi)]\rE[p_t(\xi)\xi^2]+\rE[p_t(\xi)]\rE[p_r(\xi)\xi^2]\Big)+n^{-2}(r!t!)^{-1}\rE\Big[\xi^r\eta^r \delta_n^*(r+t,\xi+\eta)\Big]. 
\end{align}
 \end{lemma}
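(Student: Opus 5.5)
Both identities come from writing $1-q_X-q_Y=1-(\xi+\eta)/n$ and applying Lemma~\ref{L5} with $x=\xi+\eta$ and exponent $n-(r+t)$. The hypothesis $x\le n/2$ holds because $\xi+\eta=n(q_X+q_Y)\le 2nq_1\le n/2$ under $q_1\le 1/4$; this is exactly why the stronger condition in the Remark is needed. Hence
\[
(1-q_X-q_Y)^{n-r-t}=e^{-\xi-\eta}+n^{-1}\delta_n(r+t,\xi+\eta).
\]
Multiplying by $(r!t!)^{-1}\xi^r\eta^t$ and taking expectations turns the main term into $\rE[p_r(\xi)p_t(\eta)]$. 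Since $\eta$ is an independent copy of $\xi$, this factorizes as $\rE[p_r(\xi)]\rE[p_t(\xi)]$, which gives \eqref{Q1} directly.

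For \eqref{Q2}, I will substitute the second-order expansion $\delta_n(r+t,x)=e^{-x}x(r+t-x/2)+n^{-1}\delta_n^*(r+t,x)$ from Lemma~\ref{L5} into \eqref{Q1}. The term $n^{-2}(r!t!)^{-1}\rE[\xi^r\eta^t\delta^*_n(r+t,\xi+\eta)]$ becomes the remainder. The exponent on $\eta$ in the displayed statement should read $\eta^t$; the $\eta^r$ looks like a typo. The remaining $n^{-1}$ term is
\[
n^{-1}\rE\Big[p_r(\xi)p_t(\eta)(\xi+\eta)\big(r+t-\tfrac12(\xi+\eta)\big)\Big].
\]

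Next I expand $(\xi+\eta)(r+t)-\tfrac12(\xi+\eta)^2=(r+t)\xi+(r+t)\eta-\tfrac12\xi^2-\tfrac12\eta^2-\xi\eta$ and use independence on each monomial:
\begin{itemize}
\item the linear terms give $(r+t)\big(\rE[p_r(\xi)\xi]\rE[p_t(\xi)]+\rE[p_r(\xi)]\rE[p_t(\xi)\xi]\big)$;
\item the squares give $-\tfrac12\big(\rE[p_r(\xi)\xi^2]\rE[p_t(\xi)]+\rE[p_r(\xi)]\rE[p_t(\xi)\xi^2]\big)$;
\item the cross term gives $-\rE[p_r(\xi)\xi]\rE[p_t(\xi)\xi]$.
\end{itemize}
Matching these three groups with \eqref{Q2} completes the proof.

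The argument has no real obstacle. The only points requiring care are the range condition $\xi+\eta\le n/2$, and the use of $\eta$ as an independent copy of $\xi$, which allows each product expectation to split.
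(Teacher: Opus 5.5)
Your proposal is correct and is essentially the paper's own proof. You substitute the expansions of $\delta_n$ and $\delta_n^*$ from Lemma~\ref{L5} at $x=\xi+\eta$ with exponent $n-r-t$, factor by the independence of $\xi$ and $\eta$, and expand $(\xi+\eta)\bigl(r+t-\tfrac12(\xi+\eta)\bigr)$; you are also right that $\eta^r$ in the remainder should read $\eta^t$, as it does later in $I_2$. One quibble: $q_1\le 1/4$ is not needed for this lemma, because the identities hold for every $x$ by the very definition of $\delta_n$ and $\delta_n^*$, and the condition matters only for the bounds \eqref{del}--\eqref{del*} used later.
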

\begin{proof}
 Relation \eqref{Q1} follows from
 \[(1-q_X-q_Y)^{n-r-t}=e^{-\xi-\eta}+n^{-1}\delta_n(r+t,\xi+\eta).\]
On the other hand, in terms of the function $\delta^*_n(r,x)$, we have
\begin{align*}
 Q_n(r,t)&=\rE[p_r(\xi)]\rE[p_t(\xi)]+n^{-1}\rE[p_r(\xi)p_t(\eta)(\xi+\eta)(r+t-2^{-1}(\xi+\eta))]\\
&+n^{-2}(r!t!)^{-1}\rE\Big[\xi^r\eta^r \delta_n^*(r+t,\xi+\eta)\Big],
\end{align*}
leading to \eqref{Q2}.
\end{proof}

\begin{lemma}\label{Ld}
Assuming $q_1\le1/4$, define $R_0(n,r)$ by
\begin{align*}
\rE[\hat q_r]&=\rE[p_{r}(\xi)]+n^{-1}R_0(n,r).
\end{align*}
Then for $0\le r\le n$ and $n\ge 1$,
\[-(r!)^{-1}\beta ^{r}(r^2+2)\le  R_0(n,r)\le (r!)^{-1}r(2\beta)^{r}.\]
\end{lemma}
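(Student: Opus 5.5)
Starting from Lemma~\ref{tm0}, we write
\[
\rE[\hat q_r]=\binom{n}{r}\rE[q_X^r(1-q_X)^{n-r}]=\frac{n!}{(n-r)!\,n^r}\,(r!)^{-1}\rE\big[\xi^r(1-\xi/n)^{n-r}\big].
\]
Lemma~\ref{L4} gives $\frac{n!}{(n-r)!n^r}=1-n^{-1}\varphi_n(r)$. Lemma~\ref{L5} gives $(1-\xi/n)^{n-r}=e^{-\xi}+n^{-1}\delta_n(r,\xi)$, and it applies because $\xi\le\beta=nq_1\le n/4\le n/2$. Multiplying out,
\[
\rE[\hat q_r]=\rE[p_r(\xi)]+n^{-1}R_0(n,r),
\]
where
\[
R_0(n,r)=(r!)^{-1}\rE[\xi^r\delta_n(r,\xi)]-\varphi_n(r)\rE[p_r(\xi)]-n^{-1}\varphi_n(r)(r!)^{-1}\rE[\xi^r\delta_n(r,\xi)].
\]
Equivalently, $R_0=(1-n^{-1}\varphi_n(r))(r!)^{-1}\rE[\xi^r\delta_n(r,\xi)]-\varphi_n(r)\rE[p_r(\xi)]$.

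\textbf{Upper bound.} The factor $1-n^{-1}\varphi_n(r)$ lies in $[0,1]$: it equals $n!/((n-r)!n^r)$, which is between $0$ and $1$. The term $-\varphi_n(r)\rE[p_r(\xi)]$ is nonpositive. Hence $R_0\le (r!)^{-1}\rE[\xi^r\max(\delta_n,0)]$. It is better not to use \eqref{del} directly here, but the sharper intermediate bound from the proof of Lemma~\ref{L5}, namely $\delta_n(r,x)\le e^{-x}xr2^r$. Together with $\xi^{r+1}e^{-\xi}\le \beta^{r}\cdot\xi e^{-\xi}\le\beta^r/2$, this yields $R_0\le (r!)^{-1}r2^{r-1}\beta^r$, which is within the claimed $(r!)^{-1}r(2\beta)^r$. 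The cruder route, $\delta_n\le r2^{r-1}$ and $\xi^r\le\beta^r$, gives $(r!)^{-1}\beta^r r2^{r-1}$ just as well. Either way the upper bound follows.

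\textbf{Lower bound.} Here we use $\delta_n(r,x)\ge-2x^2e^{-x}\ge -2$ together with $\xi^r\le\beta^r$. The first term is then at least $-2(r!)^{-1}\beta^r$, since its multiplier is in $[0,1]$ and the lower bound on $\delta_n$ is negative. For the second term, $\varphi_n(r)\le\binom r2\le r^2$ and $p_r(\xi)\le \xi^r/r!\le\beta^r/r!$, so it is at least $-(r!)^{-1}r^2\beta^r$. Adding the two gives $R_0\ge-(r!)^{-1}\beta^r(r^2+2)$.

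\textbf{Main obstacle.} The delicate point is handling the product term $n^{-1}\varphi_n(r)\delta_n$ without introducing extra factors. Grouping it as $(1-n^{-1}\varphi_n(r))\delta_n$, with a multiplier in $[0,1]$, removes that issue. The only other thing to check is that Lemma~\ref{L5} is applicable, which is exactly what $q_1\le 1/4$ guarantees.
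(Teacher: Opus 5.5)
Your proposal is correct and follows the paper's proof. It uses the same factorization via Lemmas~\ref{L4} and~\ref{L5} and the same bounds $-2\le\delta_n\le r2^{r-1}$ and $0\le\varphi_n(r)\le\binom r2$. The only difference is cosmetic: the paper absorbs the cross term into $r!\,R_0(n,r)=\rE[\xi^r\delta_n(r,\xi)]-\varphi_n(r)\rE[\xi^r(1-\xi/n)^{n-r}]$, with the last expectation in $[0,\beta^r]$, whereas you absorb it into the factor $1-n^{-1}\varphi_n(r)\in(0,1]$; both give the same constants.
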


\begin{proof}
 
Using Lemma \ref{tm0}, and the definition of $\varphi_n$ in Lemma \ref{L4}, we obtain
\begin{align*}
 \rE[\hat q_r]&={n\choose r}\rE[q_X^r(1-q_X)^{n-r}]=(r!)^{-1}\Big(1-n^{-1}\varphi_n(r)\Big)\rE[\xi^r(1-n^{-1}\xi)^{n-r}].
\end{align*}
Further, using  Lemma \ref{L5}, we get
\begin{align*}
 \rE[\hat q_r]&=\rE[p_{r}(\xi)]+(r!n)^{-1}\rE[\xi^r\delta_n(r,\xi))]-(r!n)^{-1}\varphi_n(r)\rE[\xi^r(1-n^{-1}\xi)^{n-r}].
\end{align*}
Thus 
\[r! \,R_0(n,r)=\rE[\xi^r\delta_n(r,\xi)]-\varphi_n(r)\rE[\xi^r(1-n^{-1}\xi)^{n-r}],\]
and the stated upper and lower bounds  follow from Lemmas  \ref{L4} and \ref{L5}.
\end{proof}

\section{Proof of Theorem \ref{tm1}}\label{PR2}

\subsection*{Proof of  (\ref{R1}) and (\ref{D1})}

In terms of Lemma \ref{L4}, Lemma \ref{tm0} gives
\begin{align*}
 \rE[\hat q_r]&=(r!)^{-1}\Big(1-n^{-1}\binom{r}{2}+n^{-2}\varphi^*_n(r)\Big)\rE[\xi^r(1-n^{-1}\xi)^{n-r}].
\end{align*}
By  Lemma \ref{L5},
\begin{align*}
(r!)^{-1}\rE[\xi^r(1-n^{-1}\xi)^{n-r}]&=\rE[p_{r}(\xi)]+n^{-1}(r!)^{-1}\rE[\xi^r\delta_n(r,\xi))]\\
&=\rE[p_{r}(\xi)(1+n^{-1}\xi(r-\xi/2))]+n^{-2}(r!)^{-1}\rE[\xi^r\delta^*_n(r,\xi))],
\end{align*}
which yields
\begin{align*}
 \rE[\hat q_r]&=\rE[p_{r}(\xi)]+n^{-1}J+n^{-2} R_1(n,r),
\end{align*}
where 
\begin{align*}
J&=\rE[p_{r}(\xi)(r-\xi/2)\xi]-\binom{r}{2}\rE[p_{r}(\xi)]=\frac{1}{2}\rE[p_{r}(\xi)(r-(\xi-r)^2)].
\end{align*}
Thus \eqref{R1} holds with
\begin{align*}
r!\, R_1(n,r)&=\rE[\xi^r\delta^*_n(r,\xi)]-\binom{r}{2}\rE[\xi^r\delta_n(r,\xi)]+\varphi^*_n(r)\rE[\xi^r(1-n^{-1}\xi)^{n-r}].
\end{align*}
It remains to apply the two-sided bounds of Lemma \ref{L4} and  Lemma \ref{L5} to derive \eqref{D1}.


\subsection*{Proof of  (\ref{R3}) and (\ref{D3})}
In view of Lemma~\ref{tm0}, it suffices to show that
\begin{align}\label{sist}
n\Phi_n(r,t)
&= -\alpha \rE[p_{r}(\xi)p_{t}(\xi)]
   - \rE[p_{r}(\xi)\xi]\,\rE[p_{t}(\xi)\xi]    + r\,\rE[p_{r}(\xi)]\,\rE[p_{t}(\xi)\xi] \\
&\quad + t\,\rE[p_{t}(\xi)]\, \rE[p_{r}(\xi)\xi] - rt\,\rE[p_{r}(\xi)]\,\rE[p_{t}(\xi)]
   + n^{-1}R_2(n,r,t), \nonumber
\end{align}
and that $R_2(n,r,t)$ satisfies~\eqref{D3}.

To this end, observe that
\begin{align*}
 {n \choose r,t,n-r-t}
\Big(\rE\!\Big[q_X^r &q_Y^t (1-q_X-q_Y)^{\,n-r-t}\Big]
 - n^{-1}\alpha \rE\!\left[q_X^{r+t}(1-2q_X)^{\,n-r-t}\right]\Big)\\
 &=\Big(1-(2n)^{-1}(r+t)(r+t-1)+n^{-2}\varphi^*_n(r+t)\Big)Q_n(r,t)\\
 &-n^{-1}\alpha (r!t!)^{-1}(1-n^{-1}\varphi_n(r+t))\rE\Big[\xi^{r+t}(1-2n^{-1}\xi)^{\,n-r-t}\Big].
\end{align*}
Representing the right hand side as $I_0+n^{-1}I_1+n^{-2}I_2$, we find by Lemma \ref{Co2},
\begin{align*}
I_0&=\rE[p_r(\xi)]\rE[p_t(\xi)], \\
I_{1}&=(r+t)\big(\rE[p_r(\xi)]\rE[p_t(\xi)\xi]+\rE[p_t(\xi)]\rE[p_r(\xi)\xi]\big) \nonumber\\
&-2^{-1}\rE[p_r(\xi)]\rE[p_t(\xi)\xi^2]-2^{-1}\rE[p_t(\xi)]\rE[p_r(\xi)\xi^2]-\rE[p_r(\xi)\xi]\rE[p_t(\xi)\xi]\\
&-2^{-1}(r+t)(r+t-1)\rE[p_r(\xi)]\rE[p_t(\xi)]-\alpha \rE[p_r(\xi)p_t(\xi)],\\
I_{2}&=(r!t!)^{-1}\rE\Big[\xi^r\eta^t \delta_n^*(r+t,\xi+\eta)\Big]\\
&-2^{-1}(r+t)(r+t-1)(r! t!)^{-1}\rE\Big[\xi^r\eta^t\delta_n(r+t,\xi+\eta)\Big]+\varphi^*_n(r+t)Q_n(r,t)\\
&-\alpha(r!t!)^{-1}\Big(\varphi_n(r+t)\rE\Big[\xi^{r+t}(1-2n^{-1}\xi)^{\,n-r-t}\Big]-\rE\Big[\xi^{r+t}\delta_n(r+t, 2\xi)\Big]\Big).
\end{align*}


On the other hand,  by \eqref{R1} and Lemma \ref{Ld},
\begin{align*}
\rE[\hat q_r]\rE[\hat q_t]&=\Big(\rE[p_{r}(\xi)]+(2n)^{-1}\rE[p_{r}(\xi)(r-(\xi-r)^2)]+n^{-2}R_1(n,r)\Big)\rE[\hat q_t]\\
&=\rE[p_{r}(\xi)]\Big(\rE[p_{t}(\xi)]+(2n)^{-1}\rE[p_{t}(\xi)(t-(\xi-t)^2)]+n^{-2}R_1(n,t)\Big)\\
&+(2n)^{-1}\rE[p_{r}(\xi)(r-(\xi-r)^2)]\Big(\rE[p_{t}(\xi)]+n^{-1}R_0(n,t)\Big)+n^{-2}R_1(n,r)\rE[\hat q_t],
\end{align*}
so that
\begin{align*}
\rE[\hat q_r]\rE[\hat q_t]&=I_0+n^{-1}I_3+n^{-2}I_4, 
\end{align*}
where
\begin{align*}
I_{3}&=2^{-1}\Big(\rE[p_{r}(\xi)]\rE[p_{t}(\xi)(t-(\xi-t)^2)]+\rE[p_{t}(\xi)]\rE[p_{r}(\xi)(r-(\xi-r)^2)]\Big), \\
I_{4}&=\rE[p_{r}(\xi)]R_1(n,t)+\rE[\hat q_t]R_1(n,r)+2^{-1}\rE[p_{r}(\xi)(r-(\xi-r)^2)]R_0(n,t).
\end{align*}

%

Combining all the terms, we arrive at \eqref{sist} with
\begin{align*}
R_2(n,r,t)&=\varphi^*_n(r+t)Q_n(r,t)-\alpha (r!t!)^{-1}\varphi_n(r+t))\rE\Big[\xi^{r+t}(1-2n^{-1}\xi)^{\,n-r-t}\Big]\\
&+\alpha (r!t!)^{-1}\rE\Big[\xi^{r+t}\delta_n(r+t, 2\xi)\Big]-2^{-1}(r+t)(r+t-1)(r! t!)^{-1}\rE\Big[\xi^r\eta^t\delta_n(r+t,\xi+\eta)\Big]\\
&+ (r!t!)^{-1}\rE\Big[\xi^r\eta^t \delta_n^*(r+t,\xi+\eta)\Big]-\rE[p_{r}(\xi)]R_1(n,t)-\rE[\hat q_t]R_1(n,r)\\
&-r\rE[p_{r}(\xi)(2^{-1}+\xi)]R_0(n,t)+2^{-1}\rE[p_{r}(\xi)(\xi^2+r^2)]R_0(n,t).
\end{align*}
From here we derive \eqref{D3} using previously obtained two-sided bounds as well as
\[0\le\xi,\eta\le\beta,\qquad 0\le \rE[\hat q_r], \rE[p_r(\xi)]\le \frac{\beta^r}{r!},\qquad 0\le Q_n(r,t)\le \frac{\beta^{r+t}}{r!t!}.\]

\subsection*{Proof of (\ref{R2})}

Relation (\ref{R2})  follows from Lemma~\ref{tm0}, together with relation \eqref{sist} with $r=t$. This completes the proof of Theorem~\ref{tm1}.


\begin{thebibliography}{9}

\bibitem{BHJ}
A.~D.~Barbour, L.~Holst, and S.~Janson.
\newblock \emph{Poisson Approximation}.
\newblock Oxford University Press, 1992.


\bibitem{F1}
W. Feller.
\newblock An Introduction to Probability Theory and Its Applications, Volume 1.
 \newblock John Wiley, 1950

\bibitem{GJM}
A. Gnedin, S. Janson, and Y. Malinovsky.
\newblock Maximal Counts in the Stopped Occupancy Problem.
 \newblock \emph{Preprint}	arXiv:2506.20411, 2025


\bibitem{KSC}
V.~F.~Kolchin, B.~A.~Sevast'yanov, and V.~P.~Chistyakov.
\newblock \emph{Random Allocations}.
\newblock Winston \& Sons, Washington, DC, 1978.


\end{thebibliography}
\end{document}